\documentclass[12pt]{article}
%\pdfoutput=1
\usepackage{latexsym}
\usepackage{amsmath}
\usepackage{amssymb}
\usepackage{amscd}
\usepackage{array}
\usepackage{comment}
\usepackage{amsthm}
\usepackage{amsfonts}
\usepackage{enumerate}
\usepackage{hyperref}
\usepackage{stackrel}
\usepackage{pdflscape}
\addtolength{\hoffset}{-1.6cm}
\addtolength{\textwidth}{3.2cm}
\addtolength{\topmargin}{-2.2cm}
\addtolength{\textheight}{4.4cm}

\newtheorem{theorem}{Theorem}[]
\newtheorem{proposition}[theorem]{Proposition}
\newtheorem{lemma}[theorem]{Lemma}

\theoremstyle{definition}
\newtheorem{definition}[theorem]{Definition}

\newtheorem{examples}[theorem]{Examples}

\newtheorem{remark}[theorem]{Remark}

\newcommand{\Gal}{\mathrm{Gal}}

\newcommand{\Stab}{\mathrm{Stab}}
\newcommand{\Hol}{\mathrm{Hol}}

\newcommand{\Sym}{\operatorname{Sym}}

\newcommand{\Nor}{\mathrm{Nor}}

\newcommand{\ord}{\mathrm{ord}}

\newcommand{\End}{\operatorname{End}}

\newcommand{\Aut}{\operatorname{Aut}}

\newcommand{\Id}{\operatorname{Id}}
\newcommand{\Syl}{\operatorname{Syl}}
\newcommand{\wL} {{\widetilde{L}}}

\begin{document}

\Large
\begin{center}
{\bf Automatic realization of Hopf Galois structures}

\large
\vspace{0.4cm}
Teresa Crespo

\vspace{0.2cm}

\normalsize{Departament de Matem\`atiques i Inform\`atica, Universitat de Barcelona, \\ Gran Via de les Corts Catalanes 585, 08007 Barcelona, Spain, \\
e-mail: teresa.crespo@ub.edu}
\end{center}

\let\thefootnote\relax\footnotetext{{\bf 2010 MSC:} 12F10, 16T05, 20B05, 20B35, 20D20, 20D45 \\  Keywords: Separable field extensions; Hopf algebras; Hopf Galois structures; regular groups; left braces.}

\normalsize

\begin{abstract} We consider Hopf Galois structures on a separable field extension $L/K$ of degree $p^n$, for $p$ an odd prime number, $n\geq 3$. For $p > n$,  we prove that $L/K$ has at most one abelian type of Hopf Galois structures. For a nonabelian group $N$ of order $p^n$, with commutator subgroup of order $p$, we prove that if $L/K$ has a Hopf Galois structure of type $N$, then it has a Hopf Galois structure of type $A$, where $A$ is an abelian group of order $p^n$ and having the same number of elements of order $p^m$ as $N$, for $1\leq m \leq n$.
\end{abstract}

\section{Introduction}
A Hopf Galois structure on a finite extension of fields $L/K$ is a pair $(H,\mu)$, where $H$ is
a finite cocommutative $K$-Hopf algebra  and $\mu$ is a
Hopf action of $H$ on $L$, i.e a $K$-linear map $\mu: H \to
\End_K(L)$ giving $L$ a left $H$-module algebra structure and inducing a $K$-vector space isomorphism $L\otimes_K H\to\End_K(L)$.
Hopf Galois structures were introduced by Chase and Sweedler in \cite{C-S}.
For separable field extensions, Greither and
Pareigis \cite{G-P} give the following group-theoretic
equivalent condition to the existence of a Hopf Galois structure.

\begin{theorem}\label{G-P}
Let $L/K$ be a separable field extension of degree $g$, $\wL$ its Galois closure, $G=\Gal(\wL/K), G'=\Gal(\wL/L)$. Then there is a bijective correspondence
between the set of isomorphism classes of Hopf Galois structures on $L/K$ and the set of
regular subgroups $N$ of the symmetric group $S_g$ normalized by $\lambda_G(G)$, where
$\lambda_G:G \hookrightarrow S_g$ is the monomorphism given by the action of
$G$ on the left cosets $G/G'$.
\end{theorem}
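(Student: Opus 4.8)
The plan is to establish the correspondence by Galois descent along the Galois extension $\wL/K$, turning the classification of Hopf Galois structures into a combinatorial statement about regular subgroups of $S_g=\Sym(G/G')$ (where I write $G=\Gal(\wL/K)$, $G'=\Gal(\wL/L)$, so that $L=\wL^{G'}$ and $G$ acts on the index set $G/G'$ by left translation, this action being precisely $\lambda_G$). First I would pass to the split situation: since $L/K$ is separable, $\wL\otimes_K L$ is the split étale $\wL$-algebra $\mathrm{Map}(G/G',\wL)$, and base changing a Hopf Galois structure $(H,\mu)$ along $\wL/K$ makes $B:=\wL\otimes_K H$ into a cocommutative $\wL$-Hopf algebra giving $\mathrm{Map}(G/G',\wL)$ a $B$-module algebra structure for which $\mathrm{Map}(G/G',\wL)\otimes_{\wL}B\xrightarrow{\sim}\End_{\wL}\bigl(\mathrm{Map}(G/G',\wL)\bigr)$. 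Reading this geometrically (the finite group scheme $\mathrm{Spec}\, B^{\ast}$ acting ``simply transitively'' on the discrete set $G/G'$), one concludes that $B$ is forced to be a group algebra $\wL[N]$ for a finite group $N$ with $|N|=g$, and that the $N$-action on $G/G'$ is simply transitive; equivalently, $N$ embeds as a regular subgroup of $\Sym(G/G')\cong S_g$. The combinatorial kernel here is the elementary lemma that $\wL[N]$-module algebra structures on $\mathrm{Map}(Y,\wL)$ correspond to actions of $N$ on the finite set $Y$, with bijectivity of the associated map equivalent to simple transitivity.

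Next I would reinsert the descent data. The canonical semilinear $G$-action on $B=\wL\otimes_K H$ recovers $H=B^{G}$ and, permuting the distinguished basis $N$ of $\wL[N]$, makes $N$ into a $G$-group; the semilinear $G$-action on $\mathrm{Map}(G/G',\wL)$ recovering $L$ is $\sigma\cdot f=\sigma\circ f\circ\lambda_G(\sigma)^{-1}$. Imposing that the base-changed Hopf action be $G$-equivariant (which is exactly the condition that $\mu$ itself be defined over $K$) forces, evaluated on the basis $N$, the permutation representing $\sigma\cdot\eta$ to equal $\lambda_G(\sigma)\,\eta\,\lambda_G(\sigma)^{-1}$; hence $\lambda_G(\sigma)$ must normalise $N$ inside $\Sym(G/G')$, and the $G$-action on $N$ is conjugation by $\lambda_G(G)$. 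Conversely, given a regular subgroup $N\leq S_g=\Sym(G/G')$ normalised by $\lambda_G(G)$, conjugation by $\lambda_G(G)$ makes $N$, hence $\wL[N]$, a $G$-group; since Galois descent along $\wL/K$ of (cocommutative) Hopf algebras, of the module $L$, and of the module algebra structure is effective, $H:=(\wL[N])^{G}$ is a $K$-Hopf algebra carrying a Hopf action on $L$, and $L\otimes_K H\to\End_K(L)$ is bijective because it is so after $\otimes_K\wL$, by regularity of $N$.

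It remains to see that these two constructions are mutually inverse and send isomorphic Hopf Galois structures to the same regular subgroup of $S_g$ (an isomorphism of structures becomes, over $\wL$, an isomorphism of the associated $G$-groups intertwining their regular actions on $G/G'$, hence an equality of subgroups of $\Sym(G/G')$); this gives the asserted bijection. I expect the main obstacle to be the splitting-and-descent bookkeeping: proving that the Hopf algebra of a Hopf Galois structure on a separable extension is already split by the Galois closure $\wL$ (so that $\wL\otimes_K H\cong\wL[N]$ with $N$ a $G$-group), and then matching the two semilinear $G$-actions precisely enough that ``$\mu$ descends to $K$'' collapses exactly to ``$\lambda_G(G)$ normalises $N$ in $S_g$.'' The combinatorial lemma of the first paragraph and the formal effectivity of Galois descent are routine by comparison.
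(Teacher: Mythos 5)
This theorem is stated in the paper as a quoted result of Greither--Pareigis \cite{G-P} with no proof given, and your descent outline is precisely the argument of that source: base change to $\wL$ splits $L$ into $\mathrm{Map}(G/G',\wL)$ and forces $\wL\otimes_K H$ to be a group algebra $\wL[N]$ acting regularly on $G/G'$, while the descent datum translates exactly into normalization of $N$ by $\lambda_G(G)$. The sketch is correct as it stands; the one step worth making fully explicit is that the finite \'etale group scheme $\operatorname{Spec}\bigl((\wL\otimes_K H)^{\ast}\bigr)$ is constant because the corresponding torsor $\operatorname{Spec}(\wL\otimes_K L)$ has $\wL$-rational points --- this is where cocommutativity of $H$ and separability of $L/K$ are used.
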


For a given Hopf Galois structure on a separable field extension $L/K$ of degree $g$, we will refer to the isomorphism class of the corresponding group $N$ as the type of the Hopf Galois
structure. Given a regular subgroup $N$ of $S_g$, normalized by $\lambda_G(G)$, the corresponding Hopf Galois structure $(H,\mu)$ is obtained by Galois descent.

Childs \cite{Ch1} gives an equivalent  condition to the existence of a Hopf Galois structure introducing the holomorph of the regular subgroup $N$ of $S_g$. Let $\lambda_N:N\to \Sym(N)$ be the morphism given by the action of
$N$ on itself by left translation. The holomorph $\Hol(N)$ of $N$ is the normalizer of $\lambda_N(N)$ in $\Sym(N)$. As abstract groups, we have $\Hol(N)=N\rtimes \Aut(N)$. We state the more precise formulation of Childs' result due to Byott \cite{B} (see also \cite{Ch2} Theorem 7.3).

\begin{theorem}\label{theoB} Let $G$ be a finite group, $G'\subset G$ a subgroup and $\lambda_G:G\to \Sym(G/G')$ the morphism given by the action of
$G$ on the left cosets $G/G'$.
Let $N$ be a group of
order $[G:G']$ with identity element $e_N$. Then there is a
bijection between
$$
{\cal N}=\{\alpha:N\hookrightarrow \Sym(G/G') \mbox{ such that
}\alpha (N)\mbox{ is regular}\}
$$
and
$$
{\cal G}=\{\beta:G\hookrightarrow \Sym(N) \mbox{ such that }\beta
(G')\mbox{ is the stabilizer of } e_N\}
$$
Under this bijection, if $\alpha, \alpha' \in {\cal N}$ correspond to
$\beta, \beta' \in {\cal G}$, respectively, then $\alpha(N)=\alpha'(N)$ if and only if $\beta(G)$ and $\beta'(G)$ are conjugate by an element of $\Aut(N)$; and  $\alpha(N)$ is normalized by
$\lambda_G(G)$ if and only if $\beta(G)$ is contained in the
holomorph $\Hol(N)$ of $N$.
\end{theorem}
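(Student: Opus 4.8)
The plan is to obtain the bijection by transporting the two given actions through a regular one, and then to read the remaining clauses off that transport. Write $x_0:=G'$ for the trivial coset, regarded as a point of $G/G'$, so that $\lambda_G(G')$ is exactly the stabilizer of $x_0$ in $\lambda_G(G)$. Throughout one uses that $G$ acts faithfully on $G/G'$, i.e.\ that $\lambda_G$ is injective; this is automatic in the setting of Theorem~\ref{G-P} because $\wL$ is the Galois closure, and in general it is forced by $\mathcal{G}$ being nonempty.

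First I would construct the map $\mathcal{N}\to\mathcal{G}$. Given $\alpha\in\mathcal{N}$, the regularity of $\alpha(N)$ on $G/G'$ yields a unique bijection $a_\alpha\colon G/G'\to N$ with $a_\alpha(x_0)=e_N$ and $a_\alpha\circ\alpha(n)=\lambda_N(n)\circ a_\alpha$ for all $n\in N$ (explicitly, $a_\alpha$ sends $\alpha(n)(x_0)$ to $n$). Conjugation by $a_\alpha$ is an isomorphism $\Sym(G/G')\to\Sym(N)$, so I set $\beta_\alpha(g):=a_\alpha\circ\lambda_G(g)\circ a_\alpha^{-1}$. Then $\beta_\alpha$ is injective, and since $a_\alpha$ maps $x_0$ to $e_N$ it maps the stabilizer $\lambda_G(G')$ of $x_0$ onto the stabilizer of $e_N$ in $\beta_\alpha(G)$; hence $\beta_\alpha\in\mathcal{G}$. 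For the inverse: given $\beta\in\mathcal{G}$, the $\beta(G)$-orbit of $e_N$ has length $[\beta(G):\beta(G')]=[G:G']=|N|$, so $\beta(G)$ is transitive on $N$ and $c_\beta\colon G/G'\to N$, $gG'\mapsto\beta(g)(e_N)$, is a well-defined bijection; put $\alpha_\beta(n):=c_\beta^{-1}\circ\lambda_N(n)\circ c_\beta$, an injective homomorphism whose image $c_\beta^{-1}\lambda_N(N)c_\beta$ is regular (because $\lambda_N(N)$ is), so $\alpha_\beta\in\mathcal{N}$. That these two constructions are mutually inverse reduces, after unwinding the definitions, to the identities $c_{\beta_\alpha}=a_\alpha$ and $a_{\alpha_\beta}=c_\beta$, each immediate from $a_\alpha(x_0)=e_N=c_\beta(x_0)$ together with the fact that $\alpha$, resp.\ $\beta$, is a homomorphism.

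It then remains to read off the two supplementary clauses. For the holomorph clause, the isomorphism $\Sym(N)\to\Sym(G/G')$ given by $\tau\mapsto a_\alpha^{-1}\circ\tau\circ a_\alpha$ sends $\lambda_N(N)$ to $\alpha(N)$ and $\beta_\alpha(G)$ to $\lambda_G(G)$; hence $\beta_\alpha(G)$ normalizes $\lambda_N(N)$ — that is, $\beta_\alpha(G)\subseteq\Hol(N)$ — precisely when $\lambda_G(G)$ normalizes $\alpha(N)$, which is the Greither--Pareigis condition of Theorem~\ref{G-P}. For the conjugacy clause, two embeddings $\alpha,\alpha'\in\mathcal{N}$ have $\alpha(N)=\alpha'(N)$ if and only if $\alpha'=\alpha\circ\psi$ for some $\psi\in\Aut(N)$ (take $\psi=\alpha^{-1}\circ\alpha'$). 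Substituting $\alpha'=\alpha\circ\psi$ into the construction gives $a_{\alpha'}=\psi^{-1}\circ a_\alpha$ and hence $\beta_{\alpha'}=\psi^{-1}\beta_\alpha\psi$, so $\beta_{\alpha'}(G)=\psi^{-1}\beta_\alpha(G)\psi$ is an $\Aut(N)$-conjugate of $\beta_\alpha(G)$; conversely, since $\beta_{\alpha\circ\psi}=\psi^{-1}\beta_\alpha\psi$ and $\alpha\mapsto\beta_\alpha$ is injective, $\beta_{\alpha'}=\psi^{-1}\beta_\alpha\psi$ already forces $\alpha'=\alpha\circ\psi$, hence $\alpha'(N)=\alpha(N)$.

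The purely mechanical part is the mutual-inverse verification. The step that needs real care is the conjugacy clause: one has to keep track of whether one is conjugating the homomorphisms $\beta,\beta'$ themselves or only their images $\beta(G),\beta'(G)$, and make sure the passage between ``$\alpha(N)=\alpha'(N)$'' on the $\mathcal{N}$ side and ``$\Aut(N)$-conjugacy'' on the $\mathcal{G}$ side is a genuine two-way correspondence rather than an implication in a single direction.
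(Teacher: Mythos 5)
The paper offers no proof of this theorem (it is quoted from Byott and from \cite{Ch2}), so your argument must stand on its own. The transport-of-structure construction you use — the intertwiners $a_\alpha$ and $c_\beta$, the identities $c_{\beta_\alpha}=a_\alpha$ and $a_{\alpha_\beta}=c_\beta$, and the conjugation argument for the holomorph clause — is the standard proof and is carried out correctly; your observation that $\lambda_G$ must be injective (equivalently, that $\mathcal{G}\neq\emptyset$ forces the core of $G'$ to be trivial) is a genuine hypothesis that the statement above leaves implicit. The bijection, the holomorph clause, and the forward direction of the conjugacy clause are complete.

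The one real gap is precisely the point you flag in your last paragraph and then leave open: the converse of the conjugacy clause. Your argument there starts from ``$\beta_{\alpha'}=\psi^{-1}\beta_\alpha\psi$'', i.e.\ conjugacy of the \emph{embeddings}, whereas the stated hypothesis is only that the \emph{subgroups} $\beta(G)$ and $\beta'(G)$ are conjugate by an element of $\Aut(N)$. These are not interchangeable: if $\beta(G)=\beta'(G)$ one only gets $\beta'=\beta\circ\sigma$ for some $\sigma\in\Aut(G)$ preserving $G'$, whence $\alpha'(N)=\bar\sigma^{-1}\alpha(N)\bar\sigma$ for the induced permutation $\bar\sigma$ of $G/G'$, and nothing forces $\bar\sigma$ to normalize $\alpha(N)$. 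In fact the subgroup-level converse is false in general: take $G=N=C_p$, $G'=\{1\}$, $p\geq 5$. The images $\alpha(N)$ run over all $(p-2)!$ regular cyclic subgroups of $\Sym(G/G')$, while the images $\beta(G)$ run over the $(p-2)!$ order-$p$ subgroups of $\Sym(N)$, on which $\Aut(N)\cong C_{p-1}$ acts by conjugation with strictly fewer than $(p-2)!$ orbits (for $p=5$: three orbits, of sizes $1$, $1$, $4$). Since your forward direction makes the assignment $\alpha(N)\mapsto\bigl[\beta_\alpha(G)\bigr]_{\Aut(N)}$ a well-defined surjection from a $(p-2)!$-element set onto a smaller one, it cannot be injective, so some $\alpha,\alpha'$ with $\alpha(N)\neq\alpha'(N)$ have $\Aut(N)$-conjugate images $\beta(G),\beta'(G)$. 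What you actually prove — $\alpha(N)=\alpha'(N)$ if and only if $\beta'=\psi^{-1}\beta\psi$ for some $\psi\in\Aut(N)$ — is the correct statement and is how Byott phrases it; the version in terms of the subgroups alone cannot be proved as written. So the remedy is not to strengthen your converse but to state the clause for the embeddings; you should say this explicitly rather than leaving it as a ``step that needs real care.''
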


Recently a relationship has been found between Hopf Galois structures and an algebraic structure called brace. Classical braces were introduced by W. Rump \cite{R}, as a generalisation of radical rings, in order to study the non-degenerate involutive set-theoretic solutions of the quantum Yang-Baxter equation. Recently, skew braces were introduced by Guarnieri and Vendramin \cite{G-V} in order to study the non-degenerate (not necessarily involutive) set-theoretic solutions. This connection is further exploited in \cite{SV}, where the relation of braces with other algebraic structures is established.

\begin{definition} A left brace is a set $B$ endowed with two binary operations $\cdot$ and $\circ$ such that $(B,\cdot)$ and $(B,\circ)$ are groups and the two operations are related by the brace property

$$ a\circ(b\cdot c)=(a\circ b)\cdot a^{-1} \cdot (a\circ c), \text{\ for all \ } a,b,c \in B,$$

\noindent
where $a^{-1}$ denotes the inverse of $a$ in $(B,\cdot)$. The groups $(B,\cdot)$ and $(B,\circ)$ are called respectively the additive group and the multiplicative group of the brace $B$. The brace is called classical when its additive group is abelian, skew otherwise.

A map between braces is a brace morphism if it is a group morphism both between the additive and the multiplicative groups.

\end{definition}

The relation between braces and Hopf-Galois structures was first proved by Bachiller for classical braces (see \cite{Ba} Proposition 2.3) and generalized by Guarnieri and Vendramin to skew braces.

\begin{proposition}[\cite{G-V} Proposition 4.3] \label{GV}
Let $(N,\cdot)$ be a group. There is a bijective correspondence between isomorphism classes of left braces with additive group isomorphic to $(N,\cdot)$ and classes of regular subgroups of $\Hol(N)$ under conjugation by elements of $\Aut(N)$.
\end{proposition}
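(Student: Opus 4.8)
The plan is to exhibit explicit maps in both directions, show they are mutually inverse, and then check that they match the two equivalence relations. Fix once and for all the identification of additive groups with $(N,\cdot)$, and recall that $\Hol(N)=\lambda_N(N)\rtimes\Aut(N)$ inside $\Sym(N)$, where $\lambda_N$ is the left regular representation. Given a left brace with additive group $(N,\cdot)$ and multiplicative group $(N,\circ)$, introduce for each $a\in N$ the map $\lambda_a\colon N\to N$, $\lambda_a(b)=a^{-1}\cdot(a\circ b)$. The brace property says precisely that each $\lambda_a$ is an endomorphism of $(N,\cdot)$; since $\lambda_a=\lambda_N(a)^{-1}\,\rho(a)$, where $\rho(a)$ denotes $\circ$-left translation $b\mapsto a\circ b$, the map $\lambda_a$ is a composite of bijections and hence lies in $\Aut(N,\cdot)$. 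Therefore $\rho(a)=\lambda_N(a)\,\lambda_a$ belongs to $\Hol(N)$, and $\rho\colon(N,\circ)\to\Hol(N)$, being the left regular representation of $(N,\circ)$, is injective with regular image. This attaches to the brace the regular subgroup $\rho(N)\le\Hol(N)$.

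Conversely, let $G\le\Hol(N)$ be regular. Regularity makes $g\mapsto g(e_N)$ a bijection $G\to N$; write $g_a$ for the unique element of $G$ with $g_a(e_N)=a$, and define $a\circ b:=g_a(b)$. From $(g_ag_b)(e_N)=g_a(b)=a\circ b$ one reads off $g_{a\circ b}=g_ag_b$, so $(N,\circ)$ is a group isomorphic to $G$, with identity $e_N$ since $g_{e_N}=\mathrm{id}$. Decomposing $g_a=\lambda_N(t_a)\,\phi_a$ with $t_a\in N$ and $\phi_a\in\Aut(N)$ and evaluating at $e_N$ forces $t_a=a$, whence $g_a(x)=a\cdot\phi_a(x)$; substituting this into both sides of the brace identity and using that $\phi_a$ respects $\cdot$ verifies that $(N,\cdot,\circ)$ is a left brace with additive group $(N,\cdot)$. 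The two constructions are mutually inverse: starting from a brace one recovers $a\circ b=\rho(a)(b)$, and starting from $G$ one recovers $g_a=\rho(a)$ because $\rho(a)(x)=a\circ x=g_a(x)$.

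It remains to match the equivalence relations. A brace isomorphism between two braces both having additive group $(N,\cdot)$ is in particular an element $\psi\in\Aut(N,\cdot)$, and the computation $(\psi\,\rho_1(a)\,\psi^{-1})(x)=\psi(a\circ_1\psi^{-1}(x))=\psi(a)\circ_2 x=\rho_2(\psi(a))(x)$ shows that $\psi\,\rho_1(N)\,\psi^{-1}=\rho_2(N)$; conversely, conjugating a regular $G$ by $\phi\in\Aut(N)$ transports the associated brace structure along $\phi$, which is then a brace isomorphism. Since $\Aut(N)$ normalizes $\lambda_N(N)$, conjugation by $\Aut(N)$ preserves $\Hol(N)$, and one checks that changing the chosen identification of a brace's additive group with $(N,\cdot)$ alters $\rho(N)$ only by an $\Aut(N)$-conjugation; hence the bijection descends to the asserted correspondence between isomorphism classes of braces and $\Aut(N)$-conjugacy classes of regular subgroups of $\Hol(N)$.

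The points needing the most care — rather than a genuine obstacle — are: verifying that $\lambda_a$ is a \emph{bijection}, not merely an endomorphism of $(N,\cdot)$, so that $\rho(a)$ really lands in $\Hol(N)$; and the bookkeeping of the last paragraph, namely that passing to isomorphism classes of braces on one side and to $\Aut(N)$-conjugacy classes of regular subgroups on the other is well defined and that the two quotient maps are mutually inverse. Everything else reduces to a direct substitution into the brace identity $a\circ(b\cdot c)=(a\circ b)\cdot a^{-1}\cdot(a\circ c)$.
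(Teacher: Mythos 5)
Your argument is correct and is essentially the standard proof of this result, which the paper itself does not prove but cites from Guarnieri--Vendramin; the key steps (showing $\lambda_a=\lambda_N(a)^{-1}\rho(a)\in\Aut(N,\cdot)$ so that $\rho(N)\le\Hol(N)$ is regular, the inverse construction via $g_a=\lambda_N(a)\phi_a$, and the matching of brace isomorphisms with $\Aut(N)$-conjugacy) all check out. No gaps.
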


%We recall that, if $G$ is a regular subgroup of $\Hol(N)$, then $\pi:G \rightarrow N, (x,\varphi) \mapsto x$ is bijective. Then $N$ with the %operation

%$$x \circ y= \pi(\pi^{-1}(x) \pi^{-1}(y))=x \varphi(y)$$

%\noindent
%is a group isomorphic to $G$ and $(N,\cdot,\circ)$ is a left brace.

For a finite separable field extension $L/K$ we denote by $\widetilde{L}$ a normal closure of $L/K$, by $G'$ the Galois group of $\widetilde{L}/L$. We shall call a pair of groups $(G,N)$ \emph{realizable} if a separable field extension $L/K$ of degree $|N|$ such that $\Gal(\widetilde{L}/K) \simeq G$ has a Hopf Galois structure of type $N$. By Theorems \ref{G-P} and \ref{theoB}, a pair of groups $(G,N)$, such that $G$ has a subgroup $G'$ with $[G:G']=|N|$, is realizable if and only if there exists a group monomorphism $\beta:G\hookrightarrow \Hol(N)$ such that $\beta(G')$ is the stabilizer of $e_N$. In particular a pair of groups $(G,N)$ with $|G|=|N|$ is realizable if a Galois field extension $L/K$ with Galois group isomorphic to $G$ has a Hopf Galois structure of type $N$. In this case, by Theorem \ref{theoB} and Proposition \ref{GV}, $(G,N)$ is realizable if and only if there exists a left brace $B$ with additive group isomorphic to $N$ and multiplicative group isomorphic to $G$.

In this paper we obtain that if a pair of groups $(G,N)$ is realizable where $N$ is an abelian group of order $p^n$, with $n \geq 3$, $p$ a prime number, $p > n$, then no pair $(G,N')$ is realizable, where $N'$ is an abelian group of order $p^n$ nonisomorphic to $N$. This generalizes a result in \cite{C-C-F}. We also prove that for a nonabelian group $N$ of order $p^n$, with a commutator subgroup of order $p$, if a pair of groups $(G,N)$ is realizable, then $(G,A)$ is realizable, where $A$ is an abelian group of order $p^n$ and having the same number of elements of order $p^m$ as $N$, for $1\leq m \leq n$.

We refer the reader to \cite{Ha}, \cite{Hu} or \cite{F-A} for topics on finite $p$-groups.

\section{Hopf Galois structures of abelian type}

Let $p$ denote an odd prime number. We proved in \cite{CS2} Proposition 4 that if a separable extension of degree $p^n$ has a Hopf Galois structure of cyclic type, then it has no structure of noncyclic type. In the case of separable extensions of degree $p^3$, we obtained in \cite{CS2} Theorem 9 that, for $p >3$, the two abelian noncyclic types of Hopf Galois structures do not occur on the same extension. In this section we prove that two different abelian types of Hopf Galois structures do not occur on a separable extension of degree $p^n$, for $n \geq 3$, $p > n$. This result generalizes \cite{C-C-F}, Theorem 1, where the authors prove that if $(N, +)$ is a finite abelian
$p$-group of $p$-rank $m$ where $m + 1 < p$, then every regular abelian subgroup
of the holomorph of $N$ is isomorphic to $N$. As a consequence we obtain that two classical braces of order $p^n$, $p\geq n$, with isomorphic multiplicative group must have isomorphic additive groups.

We shall need to consider the $p$-Sylow subgroup $\Syl_p(G)$ of a transitive subgroup $G$ of the holomorph of a group of order $p^n$.

\begin{lemma}\label{trans} Let $G$ be a subgroup of $\Hol(N)$, for $N$ a group of order $p^n$, where $p$ is an odd prime number. Then $G$ is transitive if and only if $\Syl_p(G)$ is transitive.
\end{lemma}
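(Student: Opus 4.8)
The plan is to prove the two implications separately; all the content is in the forward direction, the converse being immediate.

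For the converse: a Sylow $p$-subgroup of $G$ is contained in $G$, so if it is transitive on $N$ then so is $G$. (Since any two Sylow $p$-subgroups of $G$ are conjugate inside $G\le\Sym(N)$, and a conjugate of a transitive subgroup is again transitive, it is irrelevant which Sylow $p$-subgroup is chosen, so the phrase ``$\Syl_p(G)$ is transitive'' is unambiguous.)

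For the forward direction, suppose $G$ is transitive on $N$. Recall that the stabilizer of the identity element $e_N\in N$ in $\Hol(N)$ is exactly $\Aut(N)$, so that $G_0:=\Stab_G(e_N)=G\cap\Aut(N)$. Transitivity gives $[G:G_0]=|N|=p^n$; hence, writing $p^a$ for the largest power of $p$ dividing $|G|$ (so $a\ge n$ automatically), the largest power of $p$ dividing $|G_0|$ is $p^{a-n}$. Let $P$ be a Sylow $p$-subgroup of $G$, so $|P|=p^a$. Its stabilizer of $e_N$ is $P\cap\Aut(N)=P\cap G_0$, which is a $p$-subgroup of $G_0$ and therefore has order at most $p^{a-n}$. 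By the orbit--stabilizer relation, the $P$-orbit of $e_N$ has size $[P:P\cap G_0]\ge p^a/p^{a-n}=p^n=|N|$; being a subset of $N$, this orbit is all of $N$, so $P$ is transitive.

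There is no genuine obstacle here: the only points needing care are the identification $\Stab_{\Hol(N)}(e_N)=\Aut(N)$ (which is what lets us write $\Stab_P(e_N)=P\cap G_0$) and the elementary divisibility bookkeeping above, which in passing forces $P\cap G_0$ to be a Sylow $p$-subgroup of $G_0$. Equivalently, one may pick a Sylow $p$-subgroup $Q$ of $G_0$ together with a Sylow $p$-subgroup $P$ of $G$ containing $Q$; then $P\cap G_0=Q$ for order reasons, hence $|PG_0|=|G|$, i.e. $PG_0=G$, and $P$ acts transitively on $G/G_0\cong N$. (Note that the argument uses neither that $p$ is odd nor that the ambient group is a holomorph: it is the general fact that a transitive action on a set of prime-power order restricts to a transitive action of every Sylow subgroup for that prime.)
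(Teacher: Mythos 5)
Your proof is correct and takes essentially the same index-counting approach as the paper, which compares $[G:\Syl_p(G)\cap \Stab_{\Hol(N)}(e_N)]$ computed in two ways and uses that the relevant cofactors are prime to $p$. Your orbit--stabilizer phrasing even sidesteps the paper's implicit claim that $\Syl_p(G)\cap \Stab_{\Hol(N)}(e_N)$ is a Sylow $p$-subgroup of $G\cap \Stab_{\Hol(N)}(e_N)$, since you only need the upper bound $|P\cap G_0|\leq p^{a-n}$.
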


\begin{proof}
Clearly, if $G$ is a subgroup of $\Hol(N)$, then $\Syl_p(G)$ is a subgroup of $\Syl_p(\Hol(N))$. Now, $G$ is transitive if and only if $[G:G\cap \Stab_{\Hol(N)}(e_N)]=p^n$. We have the following equalities between indices.

$$\begin{array}{r}[G:\Syl_p(G)\cap \Stab_{\Hol(N)}(e_N)]=[G:\Syl_p(G)][\Syl_p(G):\Syl_p(G)\cap \Stab_{\Hol(N)}(e_N)]\\
=[G:G\cap \Stab_{\Hol(N)}(e_N)][G\cap \Stab_{\Hol(N)}(e_N):\Syl_p(G)\cap \Stab_{\Hol(N)}(e_N)].\end{array}$$

\noindent
Since $[G:\Syl_p(G)]$ and $[G\cap \Stab_{\Hol(N)}(e_N):\Syl_p(G)\cap \Stab_{\Hol(N)}(e_N)]$ are prime to $p$, we obtain that $G$ is transitive if and only if $\Syl_p(G)$ is transitive.
\end{proof}

\begin{theorem}\label{abelian}
Let $N_1,N_2$ be abelian groups of order $p^n$, with $n \geq 3$, $p > n$. Let $G$ be a group.
If the pairs $(G,N_1)$ and $(G,N_2)$ are realizable, then $N_1 \simeq N_2$.
\end{theorem}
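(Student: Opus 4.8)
The plan is to reduce the statement to a purely group-theoretic claim about regular abelian subgroups of a holomorph, and then exploit the hypothesis $p > n$ to force all such subgroups to be isomorphic. By Theorems~\ref{G-P} and~\ref{theoB}, the pair $(G,N_i)$ being realizable means there is a subgroup $G'_i \subset G$ with $[G:G'_i] = p^n$ and a monomorphism $\beta_i : G \hookrightarrow \Hol(N_i)$ with $\beta_i(G'_i)$ the stabilizer of $e_{N_i}$; in particular $\beta_i(G)$ is a transitive subgroup of $\Hol(N_i)$. First I would pass to a common setting: fix $N = N_1$ and note that realizability of $(G,N_2)$, together with the correspondence of Theorem~\ref{theoB} (and Proposition~\ref{GV} in the Galois case, but more simply the transitive-subgroup formulation), should let me produce a \emph{regular} subgroup of $\Hol(N_1)$ isomorphic to $N_2$, provided I can move between the two holomorphs using the fact that both $\beta_i(G)$ are isomorphic to $G$. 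So the real content is: if $\Hol(N_1)$ contains a transitive subgroup $G$ which also embeds transitively into $\Hol(N_2)$, then $N_1 \simeq N_2$.

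The key reduction is Lemma~\ref{trans}: $\beta_i(G)$ is transitive in $\Hol(N_i)$ if and only if its $p$-Sylow subgroup $\Syl_p(\beta_i(G))$ is transitive. Now $\Syl_p(\Hol(N_i)) = \Syl_p(N_i \rtimes \Aut(N_i))$, and since $N_i$ has order $p^n$, a $p$-Sylow of $\Hol(N_i)$ is $N_i \rtimes P_i$ where $P_i \in \Syl_p(\Aut(N_i))$. Crucially, because $p > n$, the group $\Aut(N_i)$ has $p$-part of bounded, small size: for $N_i$ abelian of order $p^n$ with $p$-rank $m \le n$, one checks $\Syl_p(\Aut(N_i))$ is a $p$-group of nilpotency class $< p$ (indeed the relevant unipotent part lives in an upper-triangular-type group of degree at most $n < p$). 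Hence $\Syl_p(\Hol(N_i))$ is a $p$-group of nilpotency class $< p$, so by the Lazard correspondence it is in bijective correspondence with a Lie ring, and a transitive subgroup of it of order $p^n$ corresponds to a Lie subring. I would then invoke the circle of ideas behind \cite{C-C-F}, Theorem~1: a regular (equivalently, transitive of order $p^n$) abelian subgroup of $\Hol(N_i)$ must itself be isomorphic to $N_i$ when the $p$-rank plus one is less than $p$. Concretely, applying that theorem in both holomorphs: $\Syl_p(\beta_1(G)) \cong N_1$ would follow if $\beta_1(G)$ were abelian, but $G$ need not be abelian, so instead I track $G$ itself.

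Here is the argument I would actually run. Since $(G,N_1)$ is realizable, there is a regular abelian subgroup $N_1' \simeq N_1$ of $\Hol(G)$ normalized appropriately — more directly, using Theorem~\ref{theoB} in the form ``$\beta(G) \subseteq \Hol(N_1)$''. The point is to compare the two $p$-Sylows. Let $P = \Syl_p(G)$; then $\beta_1(P)$ and $\beta_2(P)$ are transitive subgroups of order $p^n$ of $\Hol(N_1)$ and $\Hol(N_2)$ respectively, hence regular. A transitive subgroup of $\Hol(N_i)$ of order exactly $p^n$ is a regular subgroup, and by Proposition~\ref{GV} corresponds to a left brace with additive group $N_i$ and multiplicative group $P$. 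So $P$ is the multiplicative group of a left brace with additive group $N_1$, and also of one with additive group $N_2$. Now I would cite (or reprove, via the bound $p > n$) the statement that for a left brace of order $p^n$ with $p > n$, the additive group is determined by the multiplicative group — which is exactly the ``consequence'' advertised in the section introduction and is the $p>n$ analogue of \cite{C-C-F}. The heart of that determination is: the additive group of such a brace has the same exponent and, more strongly, the same number of elements of each order $p^m$ as the multiplicative group, because the identity $a \circ (b \cdot c) = (a\circ b)\cdot a^{-1}\cdot(a\circ c)$ together with class $< p$ lets one linearize via Lazard and conclude that $(B,\cdot)$ and $(B,\circ)$ have isomorphic associated graded Lie rings, and for $p$-groups of class $< p$ the abelian ones are classified by that data. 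Since $N_1,N_2$ are abelian, matching this invariant forces $N_1 \simeq N_2$.

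The main obstacle I anticipate is the last step: proving cleanly that the hypothesis $p > n$ forces $\Syl_p(\Aut(N_i))$ — and hence the full $p$-Sylow of the holomorph — to have nilpotency class strictly less than $p$, so that the Lazard correspondence applies and a regular abelian subgroup is constrained to be isomorphic to $N_i$. This is essentially the technical core of \cite{C-C-F}, and the delicate point is that one needs the bound in the sharp form ($p > n$, not merely $p > m+1$ with $m$ the $p$-rank, since for $N_i$ nearly cyclic the rank can be much smaller than $n$ but the exponent large — though in fact the exponent being $p^{n-m+1}$ with $m \ge 1$ keeps things in range). I would isolate this as a lemma: for $N$ abelian of order $p^n$ with $p > n$, every regular abelian subgroup of $\Hol(N)$ is isomorphic to $N$, deduced from \cite{C-C-F} Theorem~1 after checking the $p$-rank inequality $m + 1 < p$ holds (which follows from $m \le n < p$, hence $m+1 \le n < p$ when $m < n$, and handled separately via \cite{CS2} Proposition~4 when $N$ is cyclic, $m = 1$). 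Granting that lemma, the theorem follows immediately: $P = \Syl_p(G)$ is abelian (being the multiplicative group of a brace whose additive group $N_1$ is abelian forces... no — rather, one observes $\Syl_p(\beta_i(G))$ regular abelian $\Rightarrow \cong N_i$ only if it is abelian), so the genuinely clean route is to first show $\Syl_p(G)$ is abelian from one realizability and then $\cong N_1$, $\cong N_2$ from each, giving $N_1 \simeq \Syl_p(G) \simeq N_2$.
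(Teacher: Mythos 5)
Your overall plan---reduce via Lemma~\ref{trans} to a transitive $p$-subgroup of the holomorph and then show that the order statistics of $N_i$ are determined by $G$---points in the right direction, but the way you try to close the argument has two concrete failures. First, you assert that $\beta_i(\Syl_p(G))$ is a transitive subgroup \emph{of order $p^n$}, hence regular, so that Proposition~\ref{GV} and \cite{C-C-F} Theorem~1 apply. This is unjustified: the point stabilizer $G'$ has index $p^n$ but may itself have order divisible by $p$, so $\Syl_p(G)$ is transitive of order $p^m$ with $m\geq n$ and need not be regular. The brace correspondence and the Caranti--Childs--Featherstonhaugh theorem both concern \emph{regular} subgroups and do not apply to a merely transitive one. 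Second, your final route requires $\Syl_p(G)$ to be abelian (so that ``regular abelian subgroup of $\Hol(N_i)$ $\Rightarrow$ $\cong N_i$'' gives $N_1\simeq \Syl_p(G)\simeq N_2$), and you propose to deduce abelianness ``from one realizability.'' That is false: a nonabelian group $G$ of order $p^n$ can perfectly well embed as a regular subgroup of $\Hol(A)$ with $A$ abelian --- this is exactly the situation of Theorem~\ref{nonab} with $G=N$ acting by left translations --- so $(G,A)$ realizable with $A$ abelian does not force $G$ abelian. Without abelianness, \cite{C-C-F} Theorem~1 gives you nothing, and your fallback (braces of order $p^n$, $p>n$, have additive group determined by multiplicative group) is precisely the corollary of the theorem you are trying to prove, so invoking it is circular unless you supply an independent proof.

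The missing ingredient is a statement that works for an arbitrary (possibly nonabelian, possibly non-regular) transitive $p$-subgroup $G\leq\Hol(N)$ with $N$ abelian: namely that, writing $\beta(x)=(a,\varphi)$, one has $a^{p^k}=e_N$ if and only if $x^{p^k}$ lands in the stabilizer of $e_N$. The paper proves this (Lemma~\ref{order}) by a direct computation: with $\delta(x)=\gamma(x)-\Id\in\End(N)$ nilpotent of index at most $n<p$ (nilpotency of $N\gamma(G)$), the binomial expansion of $(x,\gamma(x))^{p^{k-1}}=\bigl((p^{k-1}\Id+\binom{p^{k-1}}{2}\delta(x)+\cdots)(x),\ \gamma(x)^{p^{k-1}}\bigr)$ has all coefficients divisible by $p^{k-1}$ because $p>n$, and a short nilpotency argument shows the first component is nontrivial when $x^{p^{k-1}}\neq e_N$. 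Once this is in hand, one counts fibers of the (surjective, equal-fiber) map $\pi_i\circ\beta_i:G\to N_i$ to see that $N_1$ and $N_2$ have the same number of elements of each order, which determines them as abelian groups. Your Lazard-correspondence sketch aims at something like this but is never carried out, and it is both heavier and less direct than the elementary binomial computation that actually does the job.
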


In order to prove the theorem we shall use the following lemma.

\begin{lemma}\label{order} Let $N$ be an abelian group of order $p^n$, $p>n$, $G$ a transitive subgroup of $\Hol(N)$, of order $|G|=p^m, m\geq n$. We consider the surjective map
$\pi: G \rightarrow N, (a,\varphi) \mapsto a.$
If $\pi(a,\varphi)=a$, then $a^{p^k}= e_N \Leftrightarrow (a,\varphi)^{p^k} \in \Stab_{\Hol(N)}(e_N)$.
\end{lemma}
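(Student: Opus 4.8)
The plan is to work in the semidirect product description $\Hol(N)=N\rtimes\Aut(N)$, writing an element as a pair $(a,\varphi)$ acting on $N$ by $x\mapsto a\cdot\varphi(x)$, so that the multiplication is $(a,\varphi)(b,\psi)=(a\cdot\varphi(b),\varphi\psi)$ and $\Stab_{\Hol(N)}(e_N)=\{(e_N,\varphi):\varphi\in\Aut(N)\}$. Hence $(a,\varphi)^{p^k}\in\Stab_{\Hol(N)}(e_N)$ if and only if the $N$-component of $(a,\varphi)^{p^k}$ is trivial. A routine induction shows that the $N$-component of $(a,\varphi)^{j}$ equals $\prod_{i=0}^{j-1}\varphi^{i}(a)$ (the order is irrelevant since $N$ is abelian); writing $N$ additively, this is $S_j(a)$ where $S_j:=\sum_{i=0}^{j-1}\varphi^{i}\in\End(N)$. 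So the lemma reduces to proving $S_{p^k}(a)=0 \iff p^k a=0$.

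First I would record that $\varphi$ is a $p$-element of $\Aut(N)$: since $(a,\varphi)$ lies in the $p$-group $G$, it has $p$-power order, hence so does its image $\varphi$ under the projection $\Hol(N)\to\Aut(N)$. Thus $\langle\varphi\rangle\ltimes N$ is a finite $p$-group, and the standard argument that every nontrivial normal subgroup of a $p$-group meets the centre produces a chain $0=N_0\subset N_1\subset\cdots\subset N_n=N$ of $\langle\varphi\rangle$-invariant subgroups with $|N_i/N_{i-1}|=p$, on each quotient of which $\varphi$ acts trivially because $\Aut(\Z/p)$ has order prime to $p$. Setting $\nu:=\varphi-\Id\in\End(N)$, this gives $\nu(N_i)\subseteq N_{i-1}$ for all $i$, hence $\nu^{\,n}=0$.

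Next I would expand. From $\varphi^{i}=(\Id+\nu)^{i}=\sum_{j\ge 0}\binom{i}{j}\nu^{j}$, the relation $\nu^{\,n}=0$, and the hockey-stick identity $\sum_{i=0}^{p^k-1}\binom{i}{j}=\binom{p^k}{j+1}$, I obtain
$$S_{p^k}=\sum_{j=0}^{n-1}\binom{p^k}{j+1}\,\nu^{j}.$$
The crucial point — and the only place $p>n$ enters — is that for $0\le j\le n-1$ one has $j+1\le n<p$, so
$$v_p\!\left(\binom{p^k}{j+1}\right)=v_p\!\left(\tfrac{p^k}{j+1}\binom{p^k-1}{j}\right)=k,$$
the factor $p^k/(j+1)$ contributing $k$ and the remaining $\binom{p^k-1}{j}=\prod_{i=1}^{j}\tfrac{p^k-i}{i}$ being prime to $p$. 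Regarding $N$ as a $\Z_{(p)}$-module, each coefficient of $S_{p^k}$ is thus $p^k$ times a unit of $\Z_{(p)}$, so $S_{p^k}=p^k\,U$ with $U=\sum_{j=0}^{n-1}u_j\nu^{j}$ and $u_0\in\Z_{(p)}^{\times}$. Since $u_0\Id$ is invertible on $N$ and $\sum_{j\ge 1}u_j\nu^{j}$ is nilpotent, $U\in\Aut(N)$.

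Finally, as $U$ is $\Z$-linear, $S_{p^k}(a)=p^k U(a)=U(p^k a)$, which vanishes if and only if $p^k a=0$; translating back, $(a,\varphi)^{p^k}\in\Stab_{\Hol(N)}(e_N)$ if and only if $a^{p^k}=e_N$. The main obstacle is the valuation bookkeeping of the previous paragraph: the whole argument rests on pushing the nilpotency index of $\nu$ below $p$, which is precisely what $p>n$ guarantees — without it some $\binom{p^k}{j+1}$ could have $p$-adic valuation exceeding $k$, and $U$ would fail to be invertible.
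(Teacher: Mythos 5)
Your proof is correct, and while it shares its computational core with the paper's, it finishes differently in a way worth noting. Both arguments reduce the claim to deciding when $\bigl(\sum_{i=0}^{p^k-1}\varphi^{i}\bigr)(a)$ is trivial, both set $\nu=\varphi-\Id$ and prove $\nu^{\,n}=0$ from the fact that $\varphi$ lies in a $p$-group acting on $N$ (the paper via the lower central series of $N\gamma(G)$, you via a chief series of $\langle\varphi\rangle\ltimes N$ --- essentially the same nilpotency input), and both expand $\sum_i\varphi^{i}=\sum_j\binom{p^k}{j+1}\nu^{j}$ and invoke $p>n$ to control the surviving binomial coefficients. The divergence is in how the equivalence is clinched. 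The paper proves the two implications separately: the ``if'' direction by divisibility of the coefficients together with the endomorphism-invariance of $\Omega_1(N)$ and an induction on the order of $a$, and the ``only if'' direction by a separate group-theoretic remark that $a=x^{p^{k-1}}$ cannot lie in $[S,N\gamma(G)]$ for $S$ its normal closure in the nilpotent group $N\gamma(G)$. You instead sharpen the valuation bookkeeping to $v_p\bigl(\binom{p^k}{j+1}\bigr)=k$ exactly for all $j+1\le n<p$, factor the operator as $p^k U$ with $U$ unipotent over $\Z_{(p)}$ (unit constant term plus a commuting nilpotent), and conclude that $U$ is an automorphism of $N$, so the equivalence falls out in one line with no case split and no appeal to the normal-closure remark. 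The only cost is viewing $N$ as a $\Z_{(p)}$-module, which is automatic for a finite abelian $p$-group; in exchange you get a symmetric, two-sided argument that is arguably cleaner than the paper's.
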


\begin{proof} Since the product in $\Hol(N)=N\rtimes \Aut(N)$ is defined by $(x,\varphi)(y,\psi)=(x\varphi(y),\varphi\psi)$, the map $\gamma:\Hol(N) \rightarrow \Aut(N)$ is a group morphism. Hence $\gamma(G)$ is a subgroup of $\Aut(N)$ of order a divisor of $|G|$. We use the notation in \cite{CCC}, where the funcions $\gamma$ are used to determine all Hopf-Galois structures on Galois field
extensions of degree $p^2q$, with $p$ and $q$ distinct primes, $p>2$. Since $N$ is normal in its holomorph, we have that $N\gamma(G)$ is a subgroup of $\Hol(N)$, of order a power of $p$, and thus a nilpotent group. In particular, since $|G|=p^n$, we have

\begin{equation}\label{com}
N \supset [N,\gamma(G)] \supset [N,\gamma(G),\gamma(G)] \supset \dots \supset [N,\underbrace{\gamma(G),\dots,\gamma(G)}_\text{$n$ times}]=\{1 \}.
\end{equation}

\noindent
We recall that the commutator of $a \in N$ and $\varphi \in \Aut(N)$ is $[a,\varphi]=(a^{-1},\Id)(e_N,\varphi^{-1})(a,\Id)(e_N,\varphi)$ \linebreak $=a^{-1} \varphi^{-1}(a)$.
For $(a,\varphi) \in \Hol(N)$, we have $(a,\varphi)^2=(a\varphi(a),\varphi^2)$ and, by induction on $k$, we obtain

$$
(a,\varphi)^k=(a\varphi(a)\dots \varphi^{k-1}(a),\varphi^k)=((\Id+\varphi+\dots+\varphi^{k-1})(a),\varphi^k),
$$

\noindent
where the sum is taken in the endomorphism ring of the abelian group $N$. In particular, for $(x,\gamma(x)) \in G$, we have

\begin{equation}\label{gam}
(x,\gamma(x))^k=((\Id+\gamma(x)+\dots+\gamma(x)^{k-1})(x),\gamma(x)^k).
\end{equation}

Consider the endomorphism $\delta(x)=-1+\gamma(x) \in \End(N)$. Note that for $x \in G, y \in N$, we have

$$\delta(x)(y)=y^{-1} \gamma(x)(y)=[y,\gamma^{-1}(x)] \in [N,\gamma(x)^{-1}] \subset [N,\gamma(G)],$$

\noindent
so that, by (\ref{com}), we have $\delta(x)^n=0$ in $\End(N)$. We can now rewrite (\ref{gam}) substituting $\gamma(x)=1+\delta(x)$, as

\begin{equation}\label{delta}
(x,\gamma(x))^k=((k\Id+\binom{k}{2}+\cdots + \binom{k}{k-1} \delta(x)^{k-2}+\delta(x)^{k-1})(x),\gamma(x)^k)
\end{equation}

\noindent
Since $\delta(x)^n=0$, from (\ref{delta}) we obtain

\begin{equation}\label{p}
(x,\gamma(x))^p=((p\Id+\binom{p}{2}\delta(x)+\cdots + \binom{p}{n} \delta(x)^{n-1})(x),\gamma(x)^p).
\end{equation}

Assume that $x$ has order $p$ in $N$. Since $p>n$, all binomial coefficients in (\ref{p}) are divisible by $p$. Now, since $N$ is abelian, $\Omega_1(N):=\{x \in N: x^p=1 \}$ is a subgroup of $N$ invariant under endomorphisms, hence $(p\Id+\binom{p}{2}\delta(x)+\cdots + \binom{p}{n} \delta(x)^{n-1})(x)=e_N$, which gives $(x,\gamma(x))^p \in \Stab_{\Hol(N)}(e_N)$.

Let $x$ now have order $p^k$ in $N$, for some $k>1$. We have

\begin{equation}\label{pk-1}
(x,\gamma(x))^{p^{k-1}}=((p^{k-1}\Id+\binom{p^{k-1}}{2}\delta(x)+\cdots + \binom{p^{k-1}}{n} \delta(x)^{n-1})(x),\gamma(x)^{p^{k-1}}).
\end{equation}

\noindent
Since $p>n$, all binomial coefficients are divisible by $p^{k-1}$. Since $x^{p^{k-1}} \in \Omega_1(N)$, we have $(p^{k-1}\Id+\binom{p^{k-1}}{2}\delta(x)+\cdots + \binom{p^{k-1}}{n} \delta(x)^{n-1})(x) \in \Omega_1(N)$, so that $(x,\gamma(x))^{p^{k}} \in \Stab_{\Hol(N)}(e_N)$. We claim that $(p^{k-1}\Id+\binom{p^{k-1}}{2}\delta(x)+\cdots + \binom{p^{k-1}}{n} \delta(x)^{n-1})(x) \neq 1$. Then the equivalence in the statement of the lemma will follow.

This relies on the following remark.

\begin{remark} Let $a \in N, a \neq 1$ and $S=\langle a \rangle ^{N\gamma(G)}$ be the smallest normal subgroup of $N\gamma(G)$ which contains $a$. Since $N\gamma(G)$ is nilpotent and $S$ is a nontrivial normal subgroup of $N\gamma(G)$, we have $S \varsupsetneq [S,N\gamma(G)]$. In particular, since $[S,N\gamma(G)]$ is also normalized by $N\gamma(G)$, we have that $a \not \in [S,N\gamma(G)]$.
\end{remark}

We apply this Remark to $a=x^{p^{k-1}} \neq 1$. Noting that $(\binom{p^{k-1}}{2}\delta(x)+\cdots + \binom{p^{k-1}}{n} \delta(x)^{n-1})(x) \in [S,\gamma(G)] \subset [S,N\gamma(G)]$, if we had $((p^{k-1}\Id+\binom{p^{k-1}}{2}\delta(x)+\cdots + \binom{p^{k-1}}{n} \delta(x)^{n-1})(x),\gamma(x)^{p^{k-1}})=1$, we would have $a \in [S,N\gamma(G)]$, a contradiction.
\end{proof}

\begin{proof}[Proof \nopunct]{\it of Theorem \ref{abelian}.}
If the pairs $(G,N_1)$ and $(G,N_2)$ are realizable, then $G$ has a subgroup $G'$ with $[G:G']=p^n$ and there exist group morphisms $\beta_i:G \rightarrow \Hol(N_i)$ with $\beta_i(G')=\Stab_{\Hol(N_i)}(e_{N_i}), i=1,2$. By Lemma \ref{trans}, we may assume that the order of $G$ is a $p$-power. Let $\pi_i:\Hol(N_i)=N_i\rtimes \Aut(N_i) \rightarrow N_i$ be the projection on the first factor, for $i=1,2$. Then the composition $\pi_i \circ \beta_i$ is an epimorphism and, for $x \in G$,  Lemma \ref{order} gives $x^{p^k} \in G'$ if and only if $(\pi_i \circ \beta_i)(x)^{p^k}=e_{N_i}, i=1,2$. Since the isomorphism type of a finite abelian group is determined by the number of elements of each order, the theorem is proved.
\end{proof}

\begin{remark} We note that the condition $p>n$ in Theorem \ref{abelian} is necessary. For example, by computation we obtain that a Galois extension with Galois group $C_9\times C_3\times C_3$ has Hopf Galois structures of types $C_9^2$ and $C_3^4$ and that a Galois extension with Galois group $C_3^4$ has Hopf Galois structures of type $C_9\times C_3 \times C_3$.
\end{remark}

\section{Hopf Galois structures of nonabelian type}
We proved in \cite{CS2} that if a separable field extension of degree $p^3$ has a nonabelian Hopf Galois structure of type $N$, then it has an abelian structure whose type has the same exponent as $N$. Here we generalize this result for separable field extensions of degree $p^n$. More precisely, let $A$ and $N$ be two groups of order $p^n$ such that $A$ is abelian, the commutator subgroup of $N$ has order $p$ and $A$ and $N$ have the same exponent and the same number of elements of order $p^m, 1\leq m \leq n$. With these hypothesis, if, for some group $G$, the pair $(G,N)$ is realizable, then the pair $(G,A)$ is also realizable. To prove this fact, by Theorem \ref{theoB}, it suffices to prove that $\Hol(A)$ contains a regular subgroup isomorphic to $N$ such that its normalizer in $\Hol(A)$ is equal to its normalizer in $\Sym(A)$, that is, has order equal to $\Hol(N)$.

Let $N$ be a group of order $p^n$ and assume that its commutator subgroup $[N,N]$ has order $p$. Then $N/[N,N]$ is an abelian group of order $p^{n-1}$ and $[N,N]$ is included in the center of $N$. Let $N/[N,N] = \oplus_{i=1}^{s}  \langle b_i \rangle$, with $b_i$ of order $p^{r_i}, 1\leq i \leq s$. Let $\pi:N \rightarrow N/[N,N]$ be the projection morphism. We choose $\beta_i \in N$ such that $\pi(\beta_i)=b_i$. Then $\{ \beta_i \}_{1\leq i \leq s}$ is a set of generators of $N$. Let $c$ be a generator of $[N,N]$. For a pair of indices $i,j, 1\leq i,j \leq s$, we have $\beta_i\beta_j\beta_i^{-1}\beta_j^{-1} \in [N,N]$, hence $\beta_i\beta_j\beta_i^{-1}=c^k \beta_j$, for some integer $k$. We have then $(\beta_i \beta_j)^p=c^{kp(p-1)/2} \beta_i^p\beta_j^p=\beta_i^p\beta_j^p$, since $p$ is odd.

We define an abelian group $A$ of order $p^n$ in the following way. If $\beta_i$ has the same order than $b_i$ for all $i=1,\dots,s$, then $A=\oplus_{i=1}^{s}  \langle \alpha_i \rangle \oplus \langle d \rangle$, with $\alpha_i$ of the same order as $\beta_i$ and $d$ of order $p$. If the order of $\beta_{i_0}$ is equal to $p$ times the order of $b_{i_0}$ for some $i_0$, then $A=\oplus_{i=1}^{s}  \langle \alpha_i \rangle$, with $\alpha_i$ of the same order as $\beta_i$. In this case, we put $d:=\beta_{i_0}^{\ord(\beta_{i_0})/p}$. In both cases, we have $A/\langle d \rangle \simeq N/[N,N]$ and $A$ has the same number of elements of order $p^m$ than $N$, $1\leq m \leq n$, since $\alpha_i$ and $\beta_i$ have the same order and $(\beta_i \beta_j)^p=\beta_i^p\beta_j^p$.

\begin{theorem}\label{nonab} Let $N$ and $A$ be groups of order $p^n$ as above. If for some group $G$, the pair $(G,N)$ is realizable, then $(G,A)$ is realizable.
\end{theorem}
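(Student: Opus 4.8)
\ As just observed, it suffices to produce a regular subgroup $M\subseteq\Hol(A)$ with $M\simeq N$ and $\Norm_{\Hol(A)}(M)=\Norm_{\Sym(A)}(M)$, after which Theorem \ref{theoB} gives the claim. The plan is to realise $A$ directly on the underlying set of $N$. Since $[N,N]$ is central of order $p$ and $p$ is odd, put $t=(p+1)/2$, an inverse of $2$ modulo $p$, and define a binary operation on the set $N$ by $a\star b:=ab\,[b,a]^{t}$. Using only the class-$2$ identities $[xy,z]=[x,z][y,z]$, $[x,yz]=[x,y][x,z]$ (valid because $[N,N]\subseteq Z(N)$) and $[b,a]^{p}=1$, one checks routinely that $A_{0}:=(N,\star)$ is a group with neutral element $e$, that $a^{-1}$ is also the $\star$-inverse of $a$, and that $\star$ is commutative (for instance $b\star a=ba[a,b]^{t}=ab[a,b]^{t-1}=ab[b,a]^{t}=a\star b$). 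Since $[a,a]=1$ one gets $a^{\star m}=a^{m}$ for all $m\geq 1$, so $A_{0}$ is an abelian group of order $p^{n}$ having the same number of elements of order $p^{m}$ as $N$ for each $m$; moreover $[N,N]$ is a $\star$-subgroup and $(N,\star)/[N,N]\simeq N/[N,N]$, so $A_{0}$ is a legitimate instance of the group $A$ of the statement (take $d$ a generator of $[N,N]$). As realisability of $(G,-)$ depends only on isomorphism classes, I may take $A=A_{0}$.

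The heart of the matter is then the inclusion $\Hol(N)\subseteq\Hol(A)$ inside $\Sym(N)=\Sym(A)$. Recall $\Hol(N)=\lambda_{N}(N)\rtimes\Aut(N)$ and $\Hol(A)=\lambda_{A}(A)\rtimes\Aut(A)$, where $\Aut(N)$ (resp.\ $\Aut(A)$) is the stabilizer of $e$ in $\Hol(N)$ (resp.\ in $\Hol(A)$). For $b\in N$ introduce the permutation $\ell_{b}$ of $N$ given by $\ell_{b}(x)=x[b,x]^{t}$; it is a bijection, with inverse $x\mapsto x[b,x]^{-t}$. Two identities, both established by the same commutator calculus, do the work: $\ell_{b}(u\star v)=\ell_{b}(u)\star\ell_{b}(v)$, so that $\ell_{b}\in\Aut(A)$; and $b\star\ell_{b}(x)=bx$, so that $\lambda_{A}(b)\,\ell_{b}=\lambda_{N}(b)$ as permutations of $N$, whence $\lambda_{N}(b)\in\lambda_{A}(A)\Aut(A)=\Hol(A)$. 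On the other hand $a\star b$ is a group word in the multiplication of $N$, so every $\varphi\in\Aut(N)$ preserves $\star$ as well; hence $\Aut(N)\subseteq\Aut(A)\subseteq\Hol(A)$. Combining, $\Hol(N)=\langle\lambda_{N}(N),\Aut(N)\rangle\subseteq\langle\lambda_{A}(A),\Aut(A)\rangle=\Hol(A)$, and $\Hol(N)\cap\Aut(A)=\Aut(N)$ (an element of $\Hol(N)$ that fixes $e$ lies in $\Aut(N)$).

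To conclude, take $M:=\lambda_{N}(N)$: this is a regular subgroup of $\Sym(N)=\Sym(A)$, contained in $\Hol(A)$ by the previous step, and $M\simeq N$; moreover
\[
\Norm_{\Hol(A)}(M)=\Hol(A)\cap\Norm_{\Sym(A)}(M)=\Hol(A)\cap\Hol(N)=\Hol(N)=\Norm_{\Sym(A)}(M),
\]
which is exactly what was needed. (Equivalently, and slightly more directly: if $\beta\colon G\hookrightarrow\Hol(N)$ realises $(G,N)$, i.e.\ $\beta(G')=\beta(G)\cap\Aut(N)$, then, viewing $\beta$ as a monomorphism into $\Hol(A)$, one gets $\beta(G)\cap\Stab_{\Hol(A)}(e)=\beta(G)\cap\Aut(A)=\beta(G)\cap\Aut(N)=\beta(G')$, so $\beta$ realises $(G,A)$.)

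I expect the only genuine obstacle to be guessing the right model for $A$: the abelian group on the underlying set of $N$ with the commutator-symmetrised multiplication $a\star b=ab[b,a]^{(p+1)/2}$ — in effect the additive group of the Lazard/Lie correspondent of the class-$2$ $p$-group $N$. Once this is found, all the verifications reduce to elementary commutator manipulation in a group of nilpotency class $2$, and the inclusion $\Hol(N)\subseteq\Hol(A)$, hence the normalizer equality, is immediate. (Note that $|[N,N]|=p$ is used only to match $A_{0}$ with the specific abelian group of the statement; class $2$ together with $p$ odd is all the argument itself requires.)
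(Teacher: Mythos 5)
Your proof is correct, and it takes a genuinely different route from the paper. The paper works inside $\Hol(A)$ for the explicitly presented abelian group $A$: it defines automorphisms $\varphi_i$ of $A$ encoding the commutator relations of $N$, checks that $\langle (\alpha_i,\varphi_i), d\rangle$ is a regular subgroup of $\Hol(A)$ isomorphic to $N$, and then computes its normalizer by exhibiting enough normalizing elements ($\lambda_A(A)$ together with a transported copy of $\Aut(N)$). You instead go the other way: you put the abelian structure $a\star b=ab[b,a]^{(p+1)/2}$ (the Baer correspondent) on the underlying set of $N$ and prove the single inclusion $\Hol(N)\subseteq\Hol(N,\star)$ inside $\Sym(N)$, via the identities $b\star\ell_b(x)=bx$ and $\ell_b\in\Aut(N,\star)$ for $\ell_b(x)=x[b,x]^t$; I checked these commutator computations and the verification that $(N,\star)$ is abelian with $a^{\star m}=a^m$ (hence isomorphic to the paper's $A$, since finite abelian $p$-groups are determined by their order statistics), and they are all sound. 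Your closing observation is the cleanest form of the conclusion: the very same embedding $\beta\colon G\hookrightarrow\Hol(N)\subseteq\Hol(A)$ realises $(G,A)$, because the condition ``$\beta(G')$ is the stabilizer of $e$'' lives in $\Sym(N)=\Sym(A)$ and does not depend on which group structure is placed on the set; this makes the normalizer bookkeeping automatic ($\Norm_{\Hol(A)}(\lambda_N(N))=\Hol(A)\cap\Hol(N)=\Hol(N)=\Norm_{\Sym(A)}(\lambda_N(N))$). What each approach buys: the paper's construction is explicit about generators of the regular subgroup and of its normalizer inside $\Hol(A)$, which is useful if one wants to count structures; yours is shorter, conceptually transparent, and strictly more general --- it only uses that $N$ has nilpotency class $2$ and $p$ is odd, whereas the hypothesis $|[N,N]|=p$ enters only to identify $(N,\star)$ with the specific $A$ of the statement. (This does not conflict with the paper's closing Remark: the counterexample there has $[N,N]$ non-central, hence class $3$.)
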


\begin{proof}
We define automorphisms $\varphi_i$ of $A$ by

$$\varphi_i(d)=d, \, \varphi_i(\alpha_j)=d^{k/2} \alpha_j \text{\ if \ } \beta_i\beta_j\beta_i^{-1}=c^k \beta_j, 0\leq k <p,$$

\noindent
where $k/2$ is defined modulo $p$. We note that $\varphi_i(\alpha_j^p)=\alpha_j^p$, for all $j$, hence $\varphi_i$ is well defined and we have $\varphi_i^p=\Id$. Let us prove that the subgroup of $\Hol(A)$ generated by $\{(\alpha_i,\varphi_i)\}_{1\leq i \leq s}$ and $d$ is a regular subgroup of $\Hol(A)$ isomorphic to $N$. Since $\varphi_i(\alpha_i)=\alpha_i$, and $\varphi_i^p=\Id$, the order of $(\alpha_i,\varphi_i)$ is equal to the order of $\alpha_i$ which is equal to the order of $\beta_i$, and the order of $d$ is equal to the order of $c$. Now, if
$\beta_i\beta_j\beta_i^{-1}=c^k \beta_j$, we have

$$\begin{array}{lll}(\alpha_i,\varphi_i)(\alpha_j,\varphi_j)(\alpha_i,\varphi_i)^{-1}&=&
(\alpha_i,\varphi_i)(\alpha_j,\varphi_j)(\alpha_i^{-1},\varphi_i^{-1})\\ &=&
(\alpha_i\varphi_i(\alpha_j),\varphi_i\varphi_j)(\alpha_i^{-1},\varphi_i^{-1})\\&=&
(\alpha_i d^{k/2} \alpha_j,\varphi_i\varphi_j)(\alpha_i^{-1},\varphi_i^{-1})\\
&=&(d^{k/2} \alpha_i\alpha_j\varphi_i(\varphi_j(\alpha_i^{-1})),\varphi_i\varphi_j\varphi_i^{-1})\\
&=& (d^{k/2} \alpha_i\alpha_j d^{k/2} \alpha_i^{-1}, \varphi_j)\\
&=& (d^k \alpha_j,\varphi_j) \\ &=& d^k(\alpha_j,\varphi_j).
\end{array}
$$

\noindent
and $d(\alpha_i,\varphi_i)=(\alpha_i,\varphi_i)d$, since $\varphi_i(d)=d$. Hence the subgroup $N'$ of $\Hol(A)$ generated by $\{(\alpha_i,\varphi_i)\}_{1\leq i \leq s}$ and $d$ is isomorphic to $N$. Now, since $(\alpha_i,\varphi_i)^k=(\alpha_i^k,\varphi_i^k)$ and $(\alpha_i,\varphi_i)(\alpha_j,\varphi_j)=d^{k/2} (\alpha_i\alpha_j,\varphi_i\varphi_j)$, it is a regular subgroup.

We want to prove now that the normalizer $\Nor_{\Hol(A)}(N')$ of $N'$ in $\Hol(A)$ has order equal to $|\Hol(N')|$. Let us see that $A$ is included in $\Nor_{\Hol(A)}(N')$. Indeed, for $x \in A$, we have

$$x(\alpha_i,\varphi_i)x^{-1}=(x\alpha_i\varphi_i(x^{-1}),\varphi_i)=(d^r \alpha_i,\varphi_i)=d^r(\alpha_i,\varphi_i),$$

\noindent
for some integer $r$. Hence $A$ normalizes $N'$.

We consider now the bijective map $f: A \rightarrow G, d^r \prod \alpha_i^{r_i} \mapsto c^r \prod \beta_i^{r_i}$. It induces an injective group morphism $\widetilde{f}: \Aut G \rightarrow \Aut A, \chi \mapsto \widetilde{\chi}:=f^{-1} \circ \chi \circ f.$ Since $\widetilde{\chi}$ preserves the order and is bijective, it is indeed an automorphism of $A$. We shall see that $\widetilde{f}(\Aut G)$ normalizes $N'$. For $\widetilde{\chi} \in \widetilde{f}(\Aut G)$, we have

$$\widetilde{\chi} (\alpha_i,\varphi_i) \widetilde{\chi}^{-1}=(\widetilde{\chi}(\alpha_i),\widetilde{\chi}\varphi_i \widetilde{\chi}^{-1}).$$

\noindent
We shall prove that, if $\widetilde{\chi}(\alpha_i)=\alpha_1^{r_1}\dots\alpha_s^{r_s}$, then $\widetilde{\chi} \varphi_i \widetilde{\chi}^{-1}=\varphi_1^{r_1}\dots \varphi_s^{r_s}$. We consider $\alpha_j$ and write $\widetilde{\chi}(\alpha_j)=\alpha_1^{t_1}\dots\alpha_s^{t_s}$. We have $\varphi_i(\alpha_j)=c^{k/2} \alpha_j$ if $\beta_i\beta_j\beta_i^{-1}=c^k\beta_j$. In this case, we have

\begin{equation}\label{eqq}
(\beta_1^{r_1}\dots\beta_s^{r_s})(\beta_1^{t_1}\dots\beta_s^{t_s})=c^k (\beta_1^{t_1}\dots\beta_s^{t_s})(\beta_1^{r_1}\dots\beta_s^{r_s})
\end{equation}

\noindent
 Now

$$\begin{array}{l} \alpha_j \stackrel{\varphi_i}{\mapsto} d^{k/2} \alpha_j \stackrel{\widetilde{\chi}}{\mapsto} d^{k/2}\alpha_1^{t_1}\dots\alpha_s^{t_s} \\
\alpha_j \stackrel{\widetilde{\chi}}{\mapsto} \alpha_1^{t_1}\dots\alpha_s^{t_s} \stackrel{\varphi_1^{r_1}\dots \varphi_s^{r_s}}{\mapsto} d^{k/2}\alpha_1^{t_1}\dots\alpha_s^{t_s}, \end{array}$$

\noindent
taking into account (\ref{eqq}). We obtain then

$$\widetilde{\chi} (\alpha_i,\varphi_i) \widetilde{\chi}^{-1}= c^{\ell} (\alpha_1,\varphi_1)^{r_1}\dots(\alpha_s,\varphi_s)^{r_s},$$

\noindent
for some integer $\ell$. We have then $|\Nor_{\Hol(A)}(N')|=|\Hol(N')|$, as wanted.

\end{proof}

\begin{examples}
Theorem \ref{nonab} may be applied for instance to the following pairs of groups.
\begin{enumerate}[1)]
\item  $N=C_{p^{n-1}} \rtimes C_p, A= C_{p^{n-1}} \times C_p$, for $n\geq 3$;
\item $N=\langle a,b : a^{p^n}=1, b^{p^n}=1, bab^{-1} = a^{1+p^{n-1}} \rangle, A= C_{p^n} \times C_{p^n}$, for $n \geq 2$;
\item $N=\langle a,b,c : a^{p^n}=1, b^{p}=1, c^p=1, bab^{-1} = a, cac^{-1}=a, cbc^{-1}=ba^{p^{n-1}} \rangle, A= C_{p^n} \times C_p \times C_p$, for $n \geq 2$;
\item $N=\langle a,b,c : a^{p^n}=1, b^{p}=1, c^p=1, bab^{-1} = a, cac^{-1}=a^{1+p^{n-1}}, cbc^{-1}=b \rangle, A= C_{p^n} \times C_p \times C_p$, for $n \geq 2$;
\item $N=\langle a,b,c : a^{p^n}=1, b^{p}=1, c^p=1, bab^{-1} = a, cac^{-1}=ab, cbc^{-1}=b \rangle, A= C_{p^n} \times C_p \times C_p$, for $n \geq 2$.
\end{enumerate}
\end{examples}

\begin{remark} We note that the condition that the commutator subgroup of $N$ has order $p$ in Theorem \ref{nonab} is necessary. For example, the group $N:=\langle a,b,c : a^{p^2}=1, b^{p}=1, c^p=1, bab^{-1} = a^{1+p}, cac^{-1}=ab, cbc^{-1}=b \rangle$ has the same number of elements of order $p^2$ as $A:=C_{p^2} \times C_p \times C_p$, namely $p^4-p^3$, but $[N,N]=\langle a^p,b \rangle$ has order $p^2$. For $p=5$, we have checked with Magma that
$\Hol(A)$ has regular subgroups isomorphic to $N$ but the order of the normalizer of $N$ in $\Hol(A)$ is not equal to the order of $\Hol(N)$.
\end{remark}

\section*{Acknowledgements} I am very grateful to the referee for
his/her comments and detailed suggestions which helped me to obtain more general results than those in the previous version of
the manuscript.

This work was supported by grant PID2019-107297GB-I00 (MICINN).

\end{document}